\newcommand{\subscr}[2]{#1_{\textup{#2}}}
\newcommand{\setdef}[2]{\{#1 \, : \; #2\}}
\newcommand{\ba}{\begin{array}}
\newcommand{\ea}{\end{array}}
\newcommand{\be}{\begin{equation}}
\newcommand{\ee}{\end{equation}}
\newcommand{\eps}{\varepsilon}
\renewcommand{\l}{\left}
\renewcommand{\r}{\right}
\def\1{\mathbf{1}}
\renewcommand{\natural}{\mathbb{N}}
\newcommand{\integernonnegative}{\mathbb{Z}_{\ge0}}
\newcommand{\Exp}{\mathbb{E}}
\renewcommand{\Pr}{\mathbb{P}}
\renewcommand{\deg}{d}
\newcommand{\outdeg}[1]{{\deg^+_{#1}}}
\newcommand{\indeg}[1]{\deg^-_{#1}}
\newcommand{\outdegmax}{{\subscr{\deg^+}{max}}}
\newcommand{\indegmax}{{\subscr{\deg^-}{max}}}
\newcommand{\degmax}{\subscr{\deg}{max}}
\newcommand{\V}{\mathcal{V}} 
\newcommand{\E}{\mathcal{E}}
\newcommand{\G}{\mathcal{G}}
\newcommand{\F}{\mathcal{F}}	
\def\neigh{\mathcal{N}}
\newcommand{\card}[1]{|#1|}
\newcommand{\xave}{\subscr{x}{ave}}
\newcommand{\CR}{C_{R}}
\newcommand{\Var}{\operatorname{Var}}
\newtheorem{definition}{Definition}[section]
\newtheorem{theorem}{Theorem}[section]
\newtheorem{corollary}[theorem]{Corollary}
\newtheorem{lemma}[theorem]{Lemma}
\newtheorem{proposition}[theorem]{Proposition}
{ \theorembodyfont{\normalfont} 
\newtheorem{remark}[theorem]{Remark}

}
\def\QEDopen{{\setlength{\fboxsep}{0pt}\setlength{\fboxrule}{0.2pt}\fbox{\rule[0pt]{0pt}{1.3ex}\rule[0pt]{1.3ex}{0pt}}}}
\def\QED{\QEDopen}
\title{The asymptotical error of broadcast gossip averaging algorithms}
\author{ Fabio Fagnani \and Paolo Frasca\thanks{Fabio Fagnani and Paolo Frasca are with the Dipartimento
di Matematica, Politecnico di Torino, Corso Duca degli Abruzzi 24, 10129 Torino, Italy.
{\tt\small \{paolo.frasca,fabio.fagnani\}@polito.it}}%
}
\date{\today}
\begin{document}
\maketitle

\begin{abstract}                       
In problems of estimation and control which involve a network, efficient distributed computation of averages is a key issue. This paper presents theoretical and simulation results about the accumulation of errors during the computation of averages by means of iterative ``broadcast gossip'' algorithms. Using martingale theory, we prove that the expectation of the accumulated error can be bounded from above by a quantity which only depends on the mixing parameter of the algorithm and on few properties of the network: its size, its maximum degree and its spectral gap. Both analytical results and computer simulations show that in several network topologies of applicative interest the accumulated error goes to zero as the size of the network grows large.
\end{abstract}



\section{Introduction}
Distributed computation of averages is an important building block to solve problems of estimation and control over networks. As a reliable time-independent communication topology may be unlikely in the applications, a growing interest has been devoted to randomized ``gossip'' algorithms to compute averages. In such algorithms, at each time step, a random subset of the nodes communicates and performs computations. Unfortunately, some of these iterative algorithms do not deterministically ensure that the average is preserved through iterations, and due to the accumulation of errors, in general there is no guarantee that the typical algorithm realization will converge to a value which is close to the desired average.

In the present paper we focus on one of these randomized algorithm, the Broadcast Gossip Algorithm (BGA). In this algorithm, a node is randomly selected at each time step to broadcast its current value to its neighbors, which in turn update their values by a local averaging rule. Since these updates are not symmetric, it is clear that the average is not preserved, but instead is changed at each time step by some amount. In this paper we study how these errors accumulate, and how much the convergence value of the algorithm differs from the original average to be computed.

\subsection{Contribution}
In this paper, we study the bias, or asymptotical error, committed by a Broadcasting Gossip averaging algorithm, and we show that large neighborhoods and a large mixing parameter induce a large asymptotical error. As a theoretical contribution, we study the average of states as a martingale, and by this interpretation we prove that on symmetric graphs the expectation of the accumulated error can not be larger than a constant times $\displaystyle\frac{q}{1-q}\frac{\degmax^2}{N\lambda_1},$ where $q$ is the ``mixing'' parameter of the algorithm, $N$ is the network size, $\degmax$ is the maximum degree of the nodes, and $\lambda_1$ is the network spectral gap. For some families of graphs (e.g, expander graphs), this is enough to prove that the bias goes to zero as $N$ goes to infinity. 
Further, by means of simulations we show that, on some example graph topologies, the mean bias is an increasing function of the mixing parameter and is proportional, on large networks, to the ratio between degree and size of the network. In particular, whenever $\degmax=o(N),$ the simulated bias goes to zero as $N$ goes to infinity. 

\subsection{Related works}
The paper~\cite{FF-SZ:08a} provides a general theory for randomized linear averaging algorithms, and presents a few example algorithms, some of which do not preserve the average of the states. 
Among these algorithms, the Broadcast Gossip Algorithm, studied in the present paper, has been attracting a wide interest, for its natural application to wireless networks: main references include the paper~\cite{TCA-MEY-ADS-AS:09} and the recent survey~\cite{AGD-SK-JMF-MGR-AS:10}. While it is simple to give conditions to ensure that the expectation of the convergence value is equal to the initial average, the problem of estimating the difference between expectation and realizations is harder, and has received partial answers in a few papers. 
In~\cite{TCA-ADS-AGD:09} the authors study a related communication model, in which the broadcasted values may not be received with a probability which depends on the transmitter and receiver nodes, and claim that ``aggressive updating combined with large neighborhoods [\ldots] result in more variance [of the convergence value] within the short time to convergence''. This intuition extends to the Broadcast Gossip Algorithm which we are considering in this paper.
Actually, in~\cite{TCA-MEY-ADS-AS:09} the variance of the limit value has been estimated for general graphs, with an upper bound which is proportional to $\l(1-\frac{\lambda_1}{\lambda_{N-1}} \frac1{1-\frac12\frac qN \lambda_{N-1}}\r),$ where $\lambda_i$ is the $i$-th smallest eigenvalue of the graph Laplacian.
This bound, however, is not useful to prove that the bias goes to zero as $N$ grows, a fact which has been proved in~\cite{FF-PF:10} for sequences of Abelian Cayley graphs with bounded degree, using tools from algebra and Markov chain theory.
Analogous problems can be studied for other randomized algorithms which do not preserve the average. For instance, in~\cite{FF-SZ:08b} two related algorithms are studied, in which node values are sent to one random neighbor only. If at each time step one random node sends its value, then the variance has an upper bound which is proportional to $\frac{q}{1-q} \frac1N$, while if at each time step every node sends its value, then the bound is proportional to $\frac{q^2}{\lambda_1 N}.$
%


%

\section{Problem statement}
Let a graph $\G=(\V,\E)$ with $\E\subset\V\times\V$ be given, together with $N=\card{\V}$ real numbers $\{x_v\}_{v\in\V}\subset[0,L]$.  For every node $v\in\V,$ we denote its out-neighborhood by $\neigh^+_v=\setdef{u\in\V}{(u,v)\in\E}$, and its in-neighborhood by $\neigh^-_v=\setdef{u\in\V}{(v,u)\in\E}$.
The following Broadcasting Gossip Algorithm (BGA) is run in order to estimate the average $\xave=N^{-1}\sum_{v\in\V}x_v$. 

At each time step $t\in\integernonnegative$, one node $v$ is sampled from a uniform distribution over $\V$. Then, node $v$ broadcasts its state $x_v(t)$ to its neighbors $u\in \neigh^+_v,$ which in turn update their states as
\be\label{eq:update-step}
x_u(t+1)=(1-q)x_u(t)+q x_v(t).
\ee
The parameter $q\in(0,1)$ is said to be the {\em mixing parameter} of the algorithm.
If instead $u\not\in\neigh^+_v$, there is no update: $x_u(t+1)=x_u(t).$

It is known from~\cite[Corollary~3.2]{FF-SZ:08a} that the BGA converges, in the sense that there exists a random variable $x^*$ such that almost surely $\lim_{t\to+\infty}x(t)=x^*\1.$ 
Let now $\xave(t)=N^{-1}\sum_{v\in\V}x_v(t)$.
Although one can find conditions to ensure that $\Exp[x^*]=\xave(0)$, in general $x^*$ is not equal to $\xave(0)$. Then, it is worth to ask how far the convergence value is from the initial average. To study this bias in the computation of the average, we define $$\beta(t)=|\xave(t)-\xave(0)|^2.$$
The goal of this work is to study this quantity, and in particular its limit $\Exp[\beta(\infty)]:=\lim_{t\to\infty}\Exp[\beta(t)]$, with a special attention to its dependence on the size of the network. In particular, we shall say that the algorithm is {\em asymptotically unbiased} if 
$\displaystyle\lim_{N\to+\infty}\Exp[\beta(\infty)]=0.$

\section{Analysis}
\subsection{A simplistic bound}
Using~\eqref{eq:update-step}, it is immediate to compute that, given $v$ to be the broadcasting node at time $t$,
\be\label{eq:increm}
\xave(t+1)-\xave(t)= \frac{q}N \sum_{u\in\neigh^+_v} (x_v(t)-x_u(t)).
\ee
Then, we can obtain the following deterministic bound on the error introduced at each time step, 
\be\label{eq:increm-bound}
|\xave(t+1)-\xave(t)|\le\frac{q}N \outdeg{v} L \le \frac{q \outdegmax }N L,\ee
where $\outdeg{v}=\card{\neigh^+_v}$ is the out-degree of node $v$, and $\outdegmax=\max_{v\in\V}{\outdeg{v}}.$ 
This simple bound is worth some informal remarks. Indeed, Equation~\eqref{eq:increm-bound} suggests that choosing a low value of the mixing parameter $q$, and a graph with low degree $\outdegmax$ and large size $N$, may ensure a small bias in the computation of the average. However, by choosing $q$, $N$ or $\outdegmax$, one affects the speed of convergence of the algorithm. Assume one is interested in an accurate computation, and chooses low values for $q$ and $\outdegmax$, compared to $N$. This choice would likely imply a slow convergence, and in turn a slow convergence may enforce to run the algorithms for a larger number of steps, in order to meet the same precision requirement. These extra steps, however, would introduce extra errors, thus possibly wasting the desired advantage in the accuracy.
We argue from this discussion that there is a delicate {\em trade-off between speed and accuracy} for the BGA algorithm. The results presented in the next sections will shed light on this trade-off.

\subsection{The average as a martingale}
In this section, we shall derive a general bound on $\Exp[\beta(\infty)]$ in terms of the topology of the graph. The derivation is based on applying the theory of martingales to the stochastic processes $x(t)$ and $\xave(t)$. The reader can find the essentials of martingale theory in~\cite{JJ-PP:03} or in~\cite{ANS:89}.

\begin{definition}\label{def:martingale}
Given a sequence (filtration) of $\sigma$-algebras $\{\sigma_n\}_{n\in\integernonnegative},$ a sequence of random variables $\{M_n\}_{n\in\integernonnegative}$ is a $\sigma_n$-martingale if $\Exp[M_m|\sigma_n]=M_n$, for any $m\ge n$.
\end{definition}

Our first result states that $\xave(t)$ is a martingale with respect to the filtration induced by $x(t)$. Before the statement, we need some definitions. Let $\outdeg{v}=\card{\neigh^+_v}$ and $\indeg{v}=\card{\neigh^-_v}$ be the out-degree and in-degree of node $v$. The graph $\G$ is said to be {\em balanced} if $\indeg{u}=\outdeg{u}$ for all $u\in \V$.
Given a set of random variables $X$, we denote by $\sigma(X)$ the sigma-algebra generated by the random variables in $X$.

\begin{proposition}\label{prop:xave-martingale}
Let us consider the BGA algorithm and the filtration $\F_t=\sigma(\setdef{x(s)}{s\le t}).$
If the graph $\G$ is balanced, then the sequence of random variables $\{\xave(t)\}_{t\in\integernonnegative}$ is a square-integrable $\F_t$-martingale.
\end{proposition}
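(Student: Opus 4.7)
The plan is to verify directly the martingale property $\Exp[\xave(t+1)\mid \F_t] = \xave(t)$, which by the tower property propagates to $\Exp[\xave(m)\mid \F_t] = \xave(t)$ for every $m\ge t$. Square-integrability can be disposed of first and almost for free: an induction on $t$ using the convex-combination structure of the update rule \eqref{eq:update-step} shows that $x_v(t)\in[0,L]$ for all $v$ and $t$ almost surely, so $\xave(t)\in[0,L]$ and a fortiori $\Exp[\xave(t)^2]\le L^2<\infty$. It is also clear that $\xave(t)$ is $\F_t$-measurable by construction.

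The substantive step is to compute the one-step conditional expectation of the increment. Since the broadcasting node $v$ at time $t$ is drawn uniformly at random from $\V$ independently of $\F_t$, formula \eqref{eq:increm} yields
\begin{equation*}
\Exp[\xave(t+1)-\xave(t)\mid \F_t]
= \frac{q}{N^2}\sum_{v\in\V}\sum_{u\in\neigh^+_v}\bigl(x_v(t)-x_u(t)\bigr).
\end{equation*}
Splitting the inner expression and using $|\neigh^+_v|=\outdeg{v}$, the first piece contributes $\sum_{v\in\V}\outdeg{v}\,x_v(t)$. For the second piece, I swap the order of summation: the pair $(v,u)$ with $u\in\neigh^+_v$ is, by the very definition of $\neigh^+$ and $\neigh^-$ given in the paper, exactly the pair such that $v\in\neigh^-_u$. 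Hence
\begin{equation*}
\sum_{v\in\V}\sum_{u\in\neigh^+_v} x_u(t)
= \sum_{u\in\V} \bigl|\{v:u\in\neigh^+_v\}\bigr|\, x_u(t)
= \sum_{u\in\V} \indeg{u}\, x_u(t).
\end{equation*}
The balanced hypothesis $\indeg{u}=\outdeg{u}$ then makes the two pieces cancel exactly, giving $\Exp[\xave(t+1)-\xave(t)\mid \F_t]=0$.

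The only place that requires care is this exchange-of-summation step together with the (slightly unusual) in/out convention adopted in the paper, because it is here that the word ``balanced'' actually enters the argument; if one got the correspondence between edges and broadcasts wrong, the two sums would not match. I do not expect any genuine obstacle, since the rest is just invoking the tower property of conditional expectation to extend the one-step identity to arbitrary $m\ge n$ and reading off the definition of martingale.
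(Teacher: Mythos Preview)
Your proof is correct and follows essentially the same route as the paper: measurability is immediate, boundedness in $[0,L]$ gives square-integrability, and the one-step conditional increment is computed from \eqref{eq:increm} by splitting the double sum, swapping the order of summation to turn the $\outdeg{v}$-weighted sum into an $\indeg{u}$-weighted one, and invoking the balanced hypothesis to obtain zero. The only differences are cosmetic---you make the tower-property extension to general $m\ge t$ and the convex-combination argument for boundedness explicit, whereas the paper leaves these implicit.
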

\begin{proof}
First, note that $\xave(t)$ is $\F_t$-measurable. Moreover, Equation~\eqref{eq:increm} implies that for all $t\ge 0$,
\begin{align*}
\Exp[\xave(t+1)-\xave(t) | \F_t]
 =&\, \frac{1}N \sum_{v\in\V} \l(\frac{q}N \sum_{u\in\neigh^+_v} (x_v(t)-x_u(t)) \r)
\\ =&\, \frac{q}{N^2} \l[ \sum_{v\in\V} \outdeg{v} x_v(t) - \sum_{v\in\V} \sum_{u\in\neigh^+_v} x_u(t) \r]
\\ =&\, \frac{q}{N^2} \l[ \sum_{v\in\V} \outdeg{v} x_v(t) - \sum_{u\in\V} \indeg{u} x_u(t) \r]
\\ =&\, \frac{q}{N^2} \l[ \sum_{v\in\V} \l(\outdeg{v}-\indeg{v}\r) x_v(t) \r]=0,
\end{align*}
since we are assuming $\indeg{u}=\outdeg{u}$ for every $u\in\V$.
Then, the sequence of random variables $(\xave(t))_{t\in\integernonnegative}$ is an $\F_t$-martingale. 
Moreover, the fact that $\xave(t)\in [0,L]$ implies that the martingale is bounded in $L^p$ for every $p\ge 1$, and in particular square-integrable.
\end{proof}

\bigskip
Let us define the distance from the agreement as $$d(t):=\frac1N \sum_{v\in\V} (x_v(t)-\xave(t))^2.$$ Using this definition, we can prove an inequality which is a refinement of~\eqref{eq:increm-bound}. Let $\degmax^\pm=\max_{v}\{\deg^\pm_v\}$ and $\degmax=\max\{\indegmax,\outdegmax\}.$
\begin{lemma}\label{lem:increm-var}
Let $\G$ be balanced. Then, the increments of the martingale $\{\xave(t)\}_{t\in\integernonnegative}$ have bounded variance, in particular, for all $t\ge 0$,
\be\label{eq:increm-var}
\Exp[(\xave(t+1)-\xave(t))^2 | \F_t]\le 4 \frac{q^2\degmax^2}{N^2} d(t).
\ee
\end{lemma}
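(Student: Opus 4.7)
The plan is to condition on the identity of the broadcasting node $v$, which is uniform on $\V$ and independent of $\F_t$. By~\eqref{eq:increm},
\[
\Exp[(\xave(t+1)-\xave(t))^2 \mid \F_t] = \frac{q^2}{N^3} \sum_{v\in\V}\Big(\sum_{u\in\neigh^+_v}(x_v(t)-x_u(t))\Big)^2,
\]
so it suffices to show that the outer sum on the right is at most $4\degmax^2 N d(t)$.

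To bound the inner square I would proceed in two steps. First, Cauchy--Schwarz gives
$\big(\sum_{u\in\neigh^+_v}(x_v(t)-x_u(t))\big)^2\le \outdeg{v}\sum_{u\in\neigh^+_v}(x_v(t)-x_u(t))^2$.
Second, to reintroduce the centered quantities that appear in $d(t)$, I would recenter about $\xave(t)$ via $(a-b)^2\le 2(a-c)^2+2(b-c)^2$ with $c=\xave(t)$, which upper-bounds the expression for a single $v$ by
$2\outdeg{v}^2(x_v(t)-\xave(t))^2 + 2\outdeg{v}\sum_{u\in\neigh^+_v}(x_u(t)-\xave(t))^2$.

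Summing over $v\in\V$ and swapping the order of summation converts the remaining double sum $\sum_v\sum_{u\in\neigh^+_v}(x_u(t)-\xave(t))^2$ into $\sum_u\indeg{u}(x_u(t)-\xave(t))^2$; this is the only point where the combinatorics of the graph enters in a non-trivial way. Dominating $\outdeg{v}$ and $\indeg{u}$ by $\degmax$ then collapses each of the two contributions into $2\degmax^2\cdot N d(t)$, for a combined total of $4\degmax^2 N d(t)$, yielding the claimed inequality after the prefactor $q^2/N^3$. I expect the main subtlety to be this interplay between Cauchy--Schwarz and recentering about $\xave(t)$: the cruder estimate $|x_v(t)-x_u(t)|\le L$ used in~\eqref{eq:increm-bound} would destroy the $d(t)$ factor, whereas keeping that factor is presumably essential for the forthcoming application, in which orthogonality of martingale increments will turn $\Exp[\beta(\infty)] = \sum_{t\ge 0}\Exp[(\xave(t+1)-\xave(t))^2]$ into a sum controlled by $\sum_{t\ge 0}\Exp[d(t)]$ and ultimately by the spectral gap $\lambda_1$.
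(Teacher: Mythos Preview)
Your argument is correct and follows essentially the same route as the paper: compute the conditional expectation by averaging over the uniform choice of $v$, apply Cauchy--Schwarz to the inner sum, recenter each $(x_v-x_u)^2$ about $\xave(t)$ via $(a-b)^2\le 2(a-c)^2+2(b-c)^2$, swap the order of summation to turn $\sum_v\sum_{u\in\neigh^+_v}$ into $\sum_u\indeg{u}$, and then dominate all degrees by $\degmax$. Your anticipation of how the $d(t)$ factor feeds into the martingale-increment decomposition of $\Exp[\beta(\infty)]$ is also on target.
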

\begin{proof}
By~\eqref{eq:increm}, we have
\begin{align*}
\Exp[(\xave(t+1)-\xave(t))^2 | \F_t] =&\, 
\frac{1}N \sum_{v\in\V} \l(\frac{q}N \sum_{u\in\neigh^+_v}(x_v(t)-x_u(t)) \r)^2\\
\le&
\frac{1}N \sum_{v\in\V} \frac{q^2}{N^2} \outdeg{v} \sum_{u\in\neigh^+_v}(x_v(t)-x_u(t))^2\\
\le& \frac1N \sum_{v\in\V} \frac{q^2}{N^2} 2 \Big( (\outdeg{v})^2 (x_v(t)-\xave(t))^2 
+ \outdeg{v} \sum_{u\in\neigh^+_v}(\xave(t)-x_u(t) )^2 \Big)\\
\le& \,2 \l(\frac{q^2}{N^2} (\outdegmax)^2 d(t)+ \outdegmax\indegmax d(t) \r)\\
\le& \,4\, \frac{q^2 \degmax^2}{N^2} d(t).
\end{align*}
This completes the proof.
\end{proof}

\bigskip
We define the rate of convergence of the algorithm as
\begin{equation*}
    R:=\sup_{x(0)}\limsup_{t\rightarrow +\infty} \Exp[d(t)]^{1/t}.
\end{equation*}
Then,  there exists a positive constant $\CR$, depending on $x(0)$, such that 
$\Exp[d(t)] \le \CR R^t. $
This fact, combined with Lemma~\ref{lem:increm-var}, implies that 
\begin{align}
\nonumber\Exp[(\xave(t+1)-\xave(t))^2] =&\, \Exp\l[\Exp[(\xave(t+1)-\xave(t))^2 | \F_t]\r]\\
\le &\,4\, \CR \frac{q^2 \degmax^2}{N^2} R^t
\label{eq:bound-on-d}
\end{align}

\bigskip
We recall that the {\em spectral gap} of the graph $\G$ is the smallest (in modulus) non-zero eigenvalue of its Laplacian matrix, and we denote this quantity by $\lambda_1$.
It is well-known that $\lambda_1$ relates to the mixing rate of Markov chains, and to the speed of convergence of gossip algorithms~\cite{SB-AG-BP-DS:06,FF-SZ:08a}: the larger the spectral gap, the faster the convergence. 
In particular, let us assume that the graph $\G$ be {\em symmetric}, that is, such that $\neigh_{v}^+=\neigh_{v}^-$ for all $v\in\V.$ Then, we know from~\cite[Equation~(18)]{FF-PF:10} that
\begin{equation}\label{eq:bound-on-R}
R\le 1-\frac{2q(1-q)}{N}\lambda_1.
\end{equation}
Using these facts, we are going to prove the next result about the asymptotic behavior of the bias as $t\to+\infty$. 
\begin{proposition}\label{prop:beta-lambda}
If $\G$ is symmetric, then there exists a constant $C>0$ such that 
$$
\Exp[\beta(\infty)] \le C \frac{q}{1-q} \frac{\degmax^2}{N \lambda_1}.
$$
\end{proposition}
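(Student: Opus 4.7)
The plan is to expand $\beta(t) = (\xave(t)-\xave(0))^2$ as a telescoping sum of martingale increments, exploit their orthogonality, and then sum a geometric series using the rate bound~\eqref{eq:bound-on-R}.

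First, since Proposition~\ref{prop:xave-martingale} tells us that $\{\xave(t)\}$ is a square-integrable $\F_t$-martingale, its increments $\Delta_s := \xave(s+1)-\xave(s)$ are pairwise orthogonal in $L^2$. Writing $\xave(t)-\xave(0)=\sum_{s=0}^{t-1}\Delta_s$ and taking expectation of the square gives
\begin{equation*}
\Exp[\beta(t)]=\sum_{s=0}^{t-1}\Exp[\Delta_s^2],
\end{equation*}
because the cross terms vanish by the tower property and the martingale property $\Exp[\Delta_s\mid\F_s]=0$.

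Next, I would plug in the bound~\eqref{eq:bound-on-d} obtained from Lemma~\ref{lem:increm-var}, namely $\Exp[\Delta_s^2]\le 4\CR\frac{q^2\degmax^2}{N^2}R^s$, and sum the resulting geometric series to get
\begin{equation*}
\Exp[\beta(t)]\le 4\CR\frac{q^2\degmax^2}{N^2}\sum_{s=0}^{t-1}R^s\le \frac{4\CR}{1-R}\frac{q^2\degmax^2}{N^2}.
\end{equation*}
Letting $t\to\infty$ is harmless since the right-hand side is uniform in $t$. Now I would invoke the symmetry hypothesis, which via~\eqref{eq:bound-on-R} gives $1-R\ge \frac{2q(1-q)}{N}\lambda_1$, so that
\begin{equation*}
\Exp[\beta(\infty)]\le \frac{4\CR\, q^2\degmax^2}{N^2}\cdot\frac{N}{2q(1-q)\lambda_1}=2\CR\,\frac{q}{1-q}\frac{\degmax^2}{N\lambda_1}.
\end{equation*}
Setting $C=2\CR$ yields the claim; note that $\CR$ ultimately can be absorbed into a constant depending only on $L$, since $d(0)\le L^2$ bounds the initial distance from agreement and hence controls $\CR$.

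I do not expect a serious obstacle: the only subtle point is justifying the orthogonality of martingale increments, which is exactly why Proposition~\ref{prop:xave-martingale} was proved as a preliminary, and the interchange of limit and sum for $t\to\infty$, which is immediate here since the partial sums are monotone and uniformly bounded. Everything else is substitution of the already-established bounds~\eqref{eq:bound-on-d} and~\eqref{eq:bound-on-R}.
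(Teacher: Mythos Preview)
Your proposal is correct and follows essentially the same route as the paper's proof: expand $\xave(t)-\xave(0)$ as a telescoping sum of martingale increments, use their $L^2$-orthogonality to reduce $\Exp[\beta(t)]$ to $\sum_s\Exp[\Delta_s^2]$, apply the bound~\eqref{eq:bound-on-d}, sum the geometric series, and finish with~\eqref{eq:bound-on-R}, obtaining the same constant $C=2\CR$. The only cosmetic difference is that the paper invokes the dominated convergence theorem to pass to the limit, while you appeal to monotonicity of the partial sums; both are adequate here.
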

\begin{proof}
Using the orthogonality of the increments of square-integrable martingales, we observe that
\begin{align*}
\lim_{t\to+\infty} \Exp[(\xave(t+1)-\xave(0))^2]
=&
\lim_{T\to+\infty} \Exp\l[\l( \sum_{t=0}^{T-1}(\xave(t+1)-\xave(t))\r)^2\r]\\
=&\lim_{T\to+\infty}  \Exp\l[  \sum_{t=0}^{T-1}(\xave(t+1)-\xave(t))^2 
\right. \\ & \left.  \qquad
+ 2 \sum_{t=1}^{T-1}\sum_{s<t}  (\xave(t+1)-\xave(t)) (\xave(s+1)-\xave(s)) \r]\\
=&\lim_{T\to+\infty}  \Big[ \sum_{t=0}^{T-1}  \Exp\l[(\xave(t+1)-\xave(t))^2\r] 
\\ &  \qquad 
+ 2 \sum_{t=1}^{T-1}\sum_{s<t} \Exp [(\xave(t+1)-\xave(t))
(\xave(s+1)-\xave(s)) ] \Big]\\
=& \lim_{T\to+\infty} \sum_{t=0}^{T-1}  \Exp\l[(\xave(t+1)-\xave(t))^2\r].
\end{align*}
By applying Equation~\eqref{eq:bound-on-d}
\begin{align*}
\lim_{t\to+\infty} \Exp[(\xave(t+1)-\xave(0))^2]\le & \,4\,\CR \frac{q^2 \degmax^2}{N^2} \lim_{T\to+\infty} \sum_{t=0}^{T-1} R^t 
= \,4\, \CR \frac{q^2 \degmax^2}{N^2} \frac{1}{1-R}.
\end{align*}
The inequality in~\eqref{eq:bound-on-R}  implies that $\displaystyle\lim_{t\to+\infty} \Exp[(\xave(t+1)-\xave(0))^2]\le 2 \,\CR \frac{q}{1-q} \frac{\degmax^2}{N \lambda_1}$. The thesis then follows, with $C=2\CR$, by applying the dominated convergence theorem. 
\end{proof}

\bigskip

Note that the proof of Proposition~\ref{prop:beta-lambda} also implies that 
$$\sup_{t} \Exp[(\xave(t+1)-\xave(0))^2]\le C \frac{q}{1-q} \frac{\degmax^2}{N \lambda_1}.$$
On the other hand, for a convergent square-integrable martingale $M_t$,  we know by Doob's maximal inequality that $\Exp{[\sup_t M_t^2]}\le 4\, \Exp[\lim_t M_t^2].$ Then, we can immediately obtain the following finite-time counterpart of Proposition~\ref{prop:beta-lambda}.

%

\begin{theorem}\label{th:xave-finite-time}
If $\G$ is symmetric, then there exists $C'>0$ such that
$$ \Exp\l[\sup_{t\in\natural}\beta(t)\r]\le C' \frac{q}{1-q} \frac{\degmax^{\,2}}{N \lambda_1}.$$
\end{theorem}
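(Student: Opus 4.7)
The plan is to get Theorem~\ref{th:xave-finite-time} essentially for free from Proposition~\ref{prop:beta-lambda} together with the Doob maximal inequality that the text explicitly invokes in the paragraph preceding the theorem. The key observation is that $\beta(t)=(\xave(t)-\xave(0))^2$ is the square of the centered martingale $M_t:=\xave(t)-\xave(0)$, which is an $\F_t$-martingale because $\xave(t)$ is one by Proposition~\ref{prop:xave-martingale}, and which is bounded in $L^2$ (in fact in $L^\infty$) because $\xave(t)\in[0,L]$. Therefore $M_t$ is a convergent square-integrable martingale, and we may write $M_\infty:=\lim_{t\to\infty}M_t$ with $M_\infty^2=\beta(\infty)$ both in the a.s.\ and $L^2$ senses.

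With this setup, the first step is just to rename: $\sup_{t\in\natural}\beta(t)=\sup_{t\in\natural}M_t^2$. The second step is to apply Doob's $L^2$ maximal inequality to the nonnegative submartingale $|M_t|$, in the form
\[
\Exp\!\l[\sup_{t\in\natural} M_t^{\,2}\r]\;\le\;4\,\Exp[M_\infty^{\,2}]\;=\;4\,\Exp[\beta(\infty)].
\]
This is exactly the inequality that the paragraph above the theorem statement recalls, and its hypotheses (convergent, square-integrable martingale) have been verified above.

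The third and final step is to plug in Proposition~\ref{prop:beta-lambda}, which gives a constant $C>0$ with $\Exp[\beta(\infty)]\le C\,\frac{q}{1-q}\,\frac{\degmax^{\,2}}{N\lambda_1}$. Setting $C':=4C$ yields the claimed bound.

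I do not foresee any real obstacle: once Proposition~\ref{prop:beta-lambda} is in hand and $\xave(t)$ is identified as a bounded $L^2$-martingale, the theorem is a one-line consequence of Doob's inequality. The only points requiring care are to state clearly that $M_t$ (rather than $\xave(t)$ itself) is the martingale one feeds into Doob, and to note that boundedness of $x(t)$ in $[0,L]$ makes convergence in $L^2$ automatic so that $\Exp[M_\infty^2]=\lim_{t\to\infty}\Exp[M_t^2]=\Exp[\beta(\infty)]$, avoiding any issue in identifying the right-hand side of Doob's inequality with the quantity bounded by Proposition~\ref{prop:beta-lambda}.
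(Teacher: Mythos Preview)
Your proposal is correct and follows exactly the route the paper takes: identify $M_t=\xave(t)-\xave(0)$ as a convergent square-integrable martingale, apply Doob's $L^2$ maximal inequality to get $\Exp[\sup_t\beta(t)]\le 4\,\Exp[\beta(\infty)]$, and then invoke Proposition~\ref{prop:beta-lambda} with $C'=4C$. There is nothing to add.
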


\smallskip
Let us now consider a sequence of graphs $\G_N$ of increasing size $N$. In such a sequence, both $\degmax$ and $\lambda_1$ are functions of $N$. In this context, Proposition~\ref{prop:beta-lambda} implies the following corollary.
\begin{corollary}
Let  $ I \subseteq \natural$ and $(\G_N)_{N\in I}$ be a sequence of symmetric graphs such that $\G_N=(\V_N,\E_N)$ and $\card{\V_N}=N$. If $$\frac{\degmax^{\,2}}{\lambda_1}=o(N) \quad \text{as } N\to+\infty,$$
then the BGA algorithm is {\em asymptotically unbiased}.
\end{corollary}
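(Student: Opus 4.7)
The plan is to apply Proposition~\ref{prop:beta-lambda} directly to each graph in the sequence and then take $N\to\infty$. For every $N\in I$, the proposition furnishes the estimate
$$\Exp[\beta(\infty)] \le C\,\frac{q}{1-q}\,\frac{\degmax^{\,2}}{N\,\lambda_1},$$
where $\degmax$ and $\lambda_1$ are understood as functions of $N$, referring to the graph $\G_N$.

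The one point that deserves care is the uniformity in $N$ of the constant $C$. Inspecting the proof of Proposition~\ref{prop:beta-lambda}, we have $C=2\CR$, where $\CR$ is the constant governing $\Exp[d(t)]\le \CR R^t$. This constant is determined by $d(0)$, and since the problem specifies $x_v(0)\in[0,L]$ for every $v$, one has $d(0)\le L^2$ uniformly in $N$. Hence $\CR$, and therefore $C$, may be chosen independent of $N$. The mixing parameter $q\in(0,1)$ is a design parameter of the algorithm and is also held fixed as $N$ varies, so the prefactor $C\,q/(1-q)$ is an $N$-independent constant.

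With $C$ and $q$ fixed, the hypothesis $\degmax^{\,2}/\lambda_1 = o(N)$ is precisely the statement that $\degmax^{\,2}/(N\lambda_1) \to 0$ as $N\to\infty$. The upper bound therefore vanishes, and since $\Exp[\beta(\infty)]\ge 0$ by definition, we conclude $\lim_{N\to\infty}\Exp[\beta(\infty)] = 0$, which is asymptotic unbiasedness. I do not anticipate a real obstacle: the corollary is essentially a restatement of Proposition~\ref{prop:beta-lambda} as an asymptotic claim, and the only nontrivial check is that the constant in the bound does not hide a dependence on $N$ through $\CR$, which is easily dispatched using the uniform boundedness of the initial states.
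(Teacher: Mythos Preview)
Your proof is correct and matches the paper's approach, which treats the corollary as an immediate consequence of Proposition~\ref{prop:beta-lambda} without giving any further argument. Your additional care about the uniformity in $N$ of the constant $C=2\CR$ is well placed and addresses a point the paper leaves implicit; the claim that $\CR$ can be taken independent of $N$ is justified since for the BGA one in fact has $\Exp[d(t)]\le R^t\,d(0)$ with $d(0)\le L^2$.
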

Note that, since $\xave(t)$ converges a.s. and $\lim_{t\to+\infty} \xave(t)\in [0,L]$, it is trivial to find an upper bound on $\Exp[\beta(\infty)]$ which does not depend on $N$. The interest of the above corollary is in giving a sufficient condition for $\Exp[\beta(\infty)]$ to be $o(1)$ as $N\to\infty.$

\begin{remark}\label{rem:concentration}
Applying Markov's inequality, we see from Proposition~\ref{prop:beta-lambda} that for any $c>0$,
$$\Pr[\beta(\infty)>c]\le C \frac{q}{1-q} \frac{\degmax^2}{N \lambda_1} \frac1c $$
In the applications, one is often interested in computing an average because the average is the maximum likelihood estimator of the expectation of a random variable. In such context, the average enjoys the property that the mean square error, committed by approximating the expectation by the average of N samples, is equal to 1/N. For this reason, one would like to ensure that the bias introduced by the Broadcast Gossip algorithm is not larger than $1/N$.  
If we take $c=1/N$, we get
\begin{align*}
\Pr\l[\beta(\infty)>\frac1N\r]\le C \frac{q}{1-q} \frac{\degmax^2}{ \lambda_1} .
\end{align*}
Then, provided the right-hand-side of this inequality does not diverge, we can choose the mixing parameter $q$ so that with a positive given probability the bias is below $1/N$. In such case, if our purpose is distributed estimation of an expected value, averages which are approximated by a Broadcast Gossip Algorithm are as good as averages computed by a centralized method. \hfill\QED
\end{remark}

\section{Examples}\label{sec:Examples}
In this section, we show that the BGA is asymptotically unbiased on several example topologies which have been considered in the literature.
Given a graph $\G_N$ of size $N$, we denote by  $\lambda_1(N)$ and  $\degmax(N)$ its spectral gap and maximum degree, respectively.
We consider the following example sequences of graphs.
\begin{itemize}
\item {\bf Expander graphs.} \\
A sequence of graphs is said to be an {\em expander} sequence if there exist $d\in\natural$ and $c>0$ such that  for every $N$, $\lambda_1(N)\ge c$ and $\degmax(N)\le d.$ In this case, $\frac{\degmax^2}{ \lambda_1}$ is bounded, and this fact implies by Proposition~\ref{prop:beta-lambda} that $$\Exp[\beta(\infty)]=O\l(\frac1N\r) \text{ as } {N\to+\infty},$$ and then the BGA is asymptotically unbiased and Remark~\ref{rem:concentration} applies.
An example of an expander sequence is given by a sequence of {\em de Bruijn graphs} on $n$ symbols of increasing dimensions $k$. 
A de Bruijn graph on $n$ symbols of dimension $k$ has $n^k$ vertices and edges from any $i$ to $ni, ni + 1, ni + 2, \ldots, ni + k - 1$ (all modulo $n^k$). Their expander properties have already been applied to efficient averaging algorithms in~\cite{JCD-RC-SZ:09}.
\smallskip
\item {\bf Hypercube graphs.} \\
The $n$-dimensional hypercube graph is the graph obtained drawing the edges of a $n$-dimensional hypercube. It has $N = 2^n$ nodes which can be identified with the binary words of length $n$, and two nodes are neighbors if the corresponding binary words differ in only one component. For these graphs it is known, for instance from~\cite[Example~7]{PF-RC-FF-SZ:08}, that $\lambda_1(N)=\Theta(1/{\log N})$ and $\degmax(N)=O( \log N).$
Then, $$\Exp[\beta(\infty)]=O\l(\frac{\log^3 N}{N}\r)\text{ as }{N\to+\infty}$$
and the BGA is asymptotically unbiased.
\smallskip
\item {\bf $k$-dimensional square lattices.} \\
We consider square lattices obtained by tiling a $k$-dimensional torus, with $N=n^k$ nodes.
For these graphs we know~\cite[Theorem~6]{RC-FF-AS-SZ:08}, that $\lambda_1(N)=\Theta(1/N^{2/k})$ and $\degmax(N)=2k.$
Then, $$\Exp[\beta(\infty)]=O\l(\frac{1}{N^{1-2/k}}\r) \text{ as }{N\to+\infty},$$
and we argue that $k-$lattices are asymptotically unbiased if $k>3.$
However, we know that the BGA is asymptotically unbiased also if $k=1,2$: this has been proved in~\cite{FF-PF:10} using different techniques.
\smallskip
\item { \bf Random geometric graphs.}\\
 We can also consider random sequences of geometric graphs constructed as follows. We sample $N$ points from a uniform distribution over the unit square, and we let nodes $i$ and $j$ be connected if the two corresponding points in the square are less than $r(N)$ far apart, with $r(N)=1.1\sqrt{\frac{\log(N)}{N}}$.
For these graphs, we know from~\cite{MP:03} that with high probability $\lambda_1(N)=\Theta(1/N)$ and $\degmax(N)=O(\log N).$
Then, with high probability $\Exp[\beta(\infty)]=O\l(\log^2 N\r)\text{ as }{N\to+\infty},$
and we can not conclude asymptotical unbiasedness.
\smallskip
\item {\bf Complete graphs.} \\
For these graphs, $\lambda_1(N)=N$ and $\degmax(N)=N-1.$\\
Then, we can not conclude from Proposition~\ref{prop:beta-lambda} that the BGA is asymptotically unbiased on complete graphs. Actually, in~\cite{FF-SZ:08a} it is shown that the BGA is {\em not} asymptotically unbiased on complete graphs, and in particular 
\be\label{eq:complete-bias}
\Exp[\beta(\infty)]=\Var(x(0)) \frac{q}{2-q} \frac{N-1}{N},
\ee
where by $\Var(x(0))$ we denote the (sample) variance of the initial condition.
\end{itemize}

\section{Simulations}\label{sec:simulations}
We have extensively simulated the evolution of BGA algorithm on sequences of graphs, and in particular on the example topologies presented in Section~\ref{sec:Examples}. 
In this section, we account for our results about the dependence of the bias $\beta(\infty)$ on the size $N$ and on the parameter $q$. 

Our simulation setup is as follows. Let $q$, $N$ and the graph topology be chosen. For every run of the algorithm we generate a vector of initial conditions $x(0)$, sampling from a uniform distribution over $[0,1].$\footnote{If the topology is random, namely a random geometric topology as described above, it is also sampled at this stage. Disconnected realizations are discarded: however, disconnected realizations are very few in our random geometric setting and their number decreases as $N$ grows, so that they are less than 2\% when $N>50$.}
Then, we run the algorithm until the disagreement $d(t)$ is below a small threshold $\eps$, which we set at $10^{-4}$. At this time $T^\eps=\inf\setdef{t\ge 0}{d(t)\le \eps}$, the algorithm is stopped, and $\beta(T^\eps)$ is evaluated. In order to simulate the expectation of $\Exp[\beta(\infty)]$, we average the outcome of 1000 realizations of $\beta(T^\eps)$.
Our results about the dependence on $N$ are summarized in Figure~\ref{fig:bias}, which plots the average bias against $N$ in a log-log diagram. As expected, complete graphs are not asymptotically unbiased, while all other topologies, in which the degree is $o(N),$ are asymptotically unbiased. In particular, for de Bruijn graphs on 2 symbols, ring graphs and torus graphs, the bias is $\Theta(N^{-1})$, whereas for hypercubes and random geometric graphs the bias is $\Theta(\frac{\log(N)}{N}).$
Overall, our set of simulations suggests that
$$\Exp[\beta(\infty)]=\Theta\l(\frac\degmax N\r) \quad \text{as }N\to\infty.$$
Results presented in Figure~\ref{fig:Nq-db} confirm that this asymptotical law is independent of the choice of $q$, provided $q<1$. Note indeed that if we run the BGA with $q=1$ in the update~\eqref{eq:update-step}, the convergence value is always one of the initial values, sampled according to a uniform distribution. This implies that, if $q=1$, then $\Exp[\beta(\infty)]=\Var(x_u(0))=1/12.$ If instead $q\in(0,1)$, simulations in Figure~\ref{fig:q} show that the bias $\Exp[\beta(\infty)]$ is an increasing function of the mixing parameter.

\begin{figure}[htb]\centering
\includegraphics[width=.8\columnwidth]{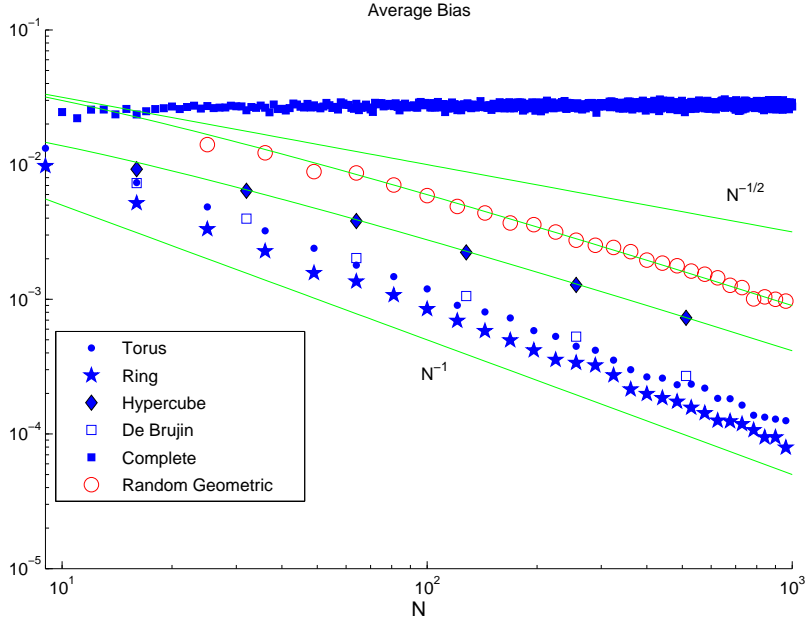} 
\caption{Average bias $\beta(\infty)$ as a function of the graph size $N$, for $q=0.5$ and on different topologies (various marks). Solid lines represent $\Theta(N^{-1/2})$, $\Theta(\log N /N)$, $\Theta(\log{N}/N)$ and $\Theta(N^{-1})$, respectively.}
\label{fig:bias}
\end{figure}
\begin{figure}[htb]\centering
\includegraphics[width=.8\columnwidth]{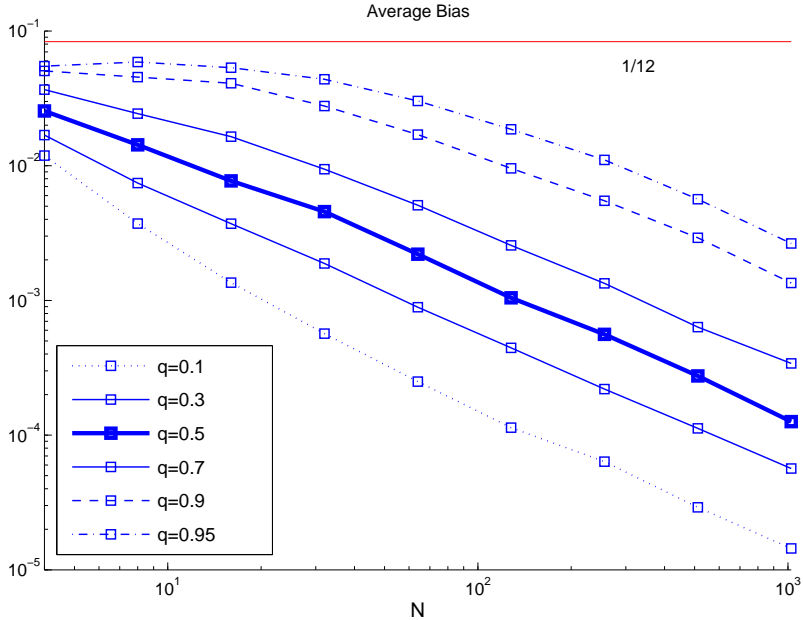} 
\caption{Average bias $\beta(\infty)$ as a function of the size $N$, on a sequence of de Bruijn graphs on $2$ symbols, for different values of $q$. The solid horizontal line represents the theoretical value $\beta(\infty)=1/12$ obtained for $q=1$.}
\label{fig:Nq-db}
\end{figure}
\begin{figure}[htb]\centering
\includegraphics[width=.8\columnwidth]{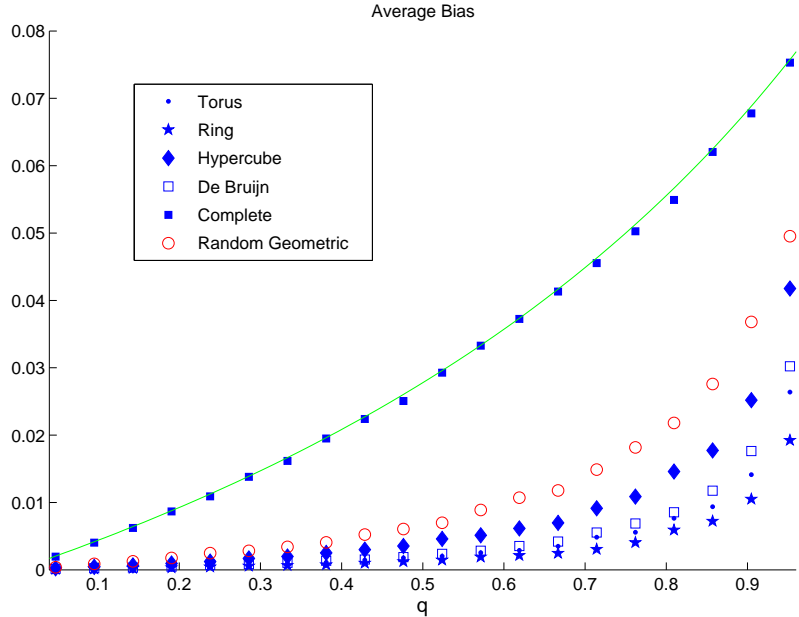} 
\caption{Average bias $\beta(\infty)$ as a function of the mixing parameter $q$, for $N=64$ and on different topologies (various marks). The solid line is~\eqref{eq:complete-bias}, confirming the theory in the complete case. }
\label{fig:q}
\end{figure}


\section{Conclusion}
In this paper, we have proven that on any symmetric graph, $\Exp[\beta(\infty)]= O\l(\frac{\degmax^{\,2}}{N \lambda_1}\r),$ and in particular the BGA is asymptotically unbiased on expander graphs.
On the other hand, simulations suggest that, on sequences of (almost) regular graphs with degree $d(N)$, the bias is such that $\Exp[\beta(\infty)]= \Theta\l(\frac{d(N)} N\r).$  
Our future research will be devoted to find a bound on $\Exp[\beta(\infty)]$ which ensure asymptotic unbiasedness on a wider set of topologies, and more in general to study the trade-offs between speed and accuracy in gossip algorithms.

\section*{Acknowledgements}
P.~Frasca wishes to thank M.~Boccuzzi for his advice in implementing the simulations and S.~Zampieri for finding an error in an earlier draft.

\bibliographystyle{plain}
\bibliography{aliasFrasca,PF,References}

\begin{thebibliography}{10}

\bibitem{TCA-ADS-AGD:09}
T.~C. Aysal, A.~D. Sarwate, and A.~G. Dimakis.
\newblock Reaching consensus in wireless networks with probabilistic broadcast.
\newblock In {\em Allerton Conf. on Communications, Control and Computing},
  Allerton, IL, USA, September 2009.

\bibitem{TCA-MEY-ADS-AS:09}
T.~C. Aysal, M.~E. Yildiz, A.~D. Sarwate, and A.~Scaglione.
\newblock Broadcast gossip algorithms for consensus.
\newblock {\em IEEE Transactions on Signal Processing}, 57(7):2748--2761, 2009.

\bibitem{SB-AG-BP-DS:06}
S.~Boyd, A.~Ghosh, B.~Prabhakar, and D.~Shah.
\newblock Randomized gossip algorithms.
\newblock {\em IEEE Transactions on Information Theory}, 52(6):2508--2530,
  2006.

\bibitem{RC-FF-AS-SZ:08}
R.~Carli, F.~Fagnani, A.~Speranzon, and S.~Zampieri.
\newblock Communication constraints in the average consensus problem.
\newblock {\em Automatica}, 44(3):671--684, 2008.

\bibitem{JCD-RC-SZ:09}
J.~C. Delvenne, R.~Carli, and S.~Zampieri.
\newblock Optimal strategies in the average consensus problem.
\newblock {\em Systems \& Control Letters}, 58(10-11):759--765, 2009.

\bibitem{AGD-SK-JMF-MGR-AS:10}
A.~G. Dimakis, S.~Kar, J.~M.~F. Moura, M.~G. Rabbat, and A.~Scaglione.
\newblock Gossip algorithms for distributed signal processing.
\newblock {\em Proceedings of the IEEE}, 99(11):1847--1864, 2010.

\bibitem{FF-PF:10}
F.~Fagnani and P.~Frasca.
\newblock Broadcast gossip averaging algorithms: interference and asymptotical
  error in large networks.
\newblock {\em IEEE Journal of Selected Topics in Signal Processing}, May 2010.
\newblock Submitted.

\bibitem{FF-SZ:08b}
F.~Fagnani and S.~Zampieri.
\newblock Asymmetric randomized gossip algorithms for consensus.
\newblock In {\em {IFAC} {W}orld {C}ongress}, pages 9052--9056, 2008.

\bibitem{FF-SZ:08a}
F.~Fagnani and S.~Zampieri.
\newblock Randomized consensus algorithms over large scale networks.
\newblock {\em IEEE Journal on Selected Areas in Communications},
  26(4):{634--649}, 2008.

\bibitem{PF-RC-FF-SZ:08}
P.~Frasca, R.~Carli, F.~Fagnani, and S.~Zampieri.
\newblock Average consensus on networks with quantized communication.
\newblock {\em International Journal of Robust and Nonlinear Control},
  19(16):1787--1816, 2009.

\bibitem{JJ-PP:03}
J.~Jacod and P.~Protter.
\newblock {\em Probability Essentials}.
\newblock Springer, 2003.

\bibitem{MP:03}
M.~Penrose.
\newblock {\em Random Geometric Graphs}.
\newblock Oxford Studies in Probability. Oxford University Press, 2003.

\bibitem{ANS:89}
A.~N. Shiryaev.
\newblock {\em Probability}.
\newblock Graduate Texts in Mathematics. Springer, 2 edition, 1989.

\end{thebibliography}

\end{document}